\documentclass[11pt]{article}

\usepackage{tikz}
\usepackage{subfigure}
\usepackage[english]{babel}

\usepackage[center]{caption2}
\usepackage{amsfonts,amssymb,amsmath,latexsym,amsthm}
\usepackage{multirow}

\topmargin  = -0.4 in \oddsidemargin = 0.25 in
\setlength{\textheight}{8.5in} \setlength{\textwidth}{6in}
\setlength{\unitlength}{1.0 mm}

\newtheorem{thm}{Theorem}

\newtheorem{lem}[thm]{Lemma}

\newtheorem{conj}[thm]{Conjecture}

\begin{document}

\title{The edge spectrum of $K_4^-$-saturated graphs
\thanks{The work was supported by NNSF of China (No. 11671376) and  NSF of Anhui Province (No. 1708085MA18).}
}
\author{Jun Gao$^a$, \quad Xinmin Hou$^b$,\quad Yue Ma$^c$\\
\small $^{a,b,c}$ Key Laboratory of Wu Wen-Tsun Mathematics\\
\small School of Mathematical Sciences\\
\small University of Science and Technology of China\\
\small Hefei, Anhui 230026, China.
}

\date{}

\maketitle

\begin{abstract}
Given graphs $G$ and $H$, $G$ is $H$-saturated if $G$ does not contain a copy of $H$ but the addition of any edge $e\notin E(G)$ creates at least one copy of $H$ within $G$. The edge spectrum of $H$ is  the set of all possible sizes of an $H$-saturated graph on $n$ vertices. Let $K_4^-$ be a graph obtained from $K_4$ by deleting an edge. In this note, we show that (a) if $G$ is a $K_4^-$-saturated graph with $|V(G)|=n$ and $|E(G)|>\lfloor \frac{n-1}{2} \rfloor \lceil \frac{n-1}{2} \rceil +2$, then $G$ must be a bipartite graph;
(b) there exists a $K_4^-$-saturated non-bipartite graph on $n\ge 10$ vertices with size being in the interval  $\left[3n-11, \left\lfloor \frac{n-1}{2} \right\rfloor \left\lceil \frac{n-1}{2} \right\rceil +2\right]$.
Together with a result of Fuller and Gould in [{\it On ($\hbox{K}_t-e$)-Saturated Graphs. Graphs   Combin., 2018}], we determine the edge spectrum of $K_4^-$ completely, and a conjecture proposed by Fuller and Gould in the same paper also has been resolved.
\end{abstract}

\section{Introduction}
Given a graph $H$, we say a graph $G$ is {\it $H$-saturated} if $G$ does not contain a copy of $H$ but the addition of any edge $e\notin E(G)$ creates at least one copy of $H$ within $G$. The minimum (resp. maximum) number of edges of an $H$-saturated graph on $n$ vertices is known as the {\it saturation }(resp. {\it Tur\'an) number}, and denoted by $sat(n,H)$ (resp. $ex(n, H)$). A natural question is to determine all possible values $m$ between $sat(n, H)$ and $ex(n,H)$ such that there is an $H$-saturated graph whose size equals $m$. We call the set of all possible sizes of an $H$-saturated graph on $n$ vertices the {\it edge spectrum} of $H$ and denoted by $ES(n, H)$.
Clearly, for $m\in ES(n, H)$, we have $sat(n, H)\le m\le ex(n, H)$.

Write $K_n$ for a complete graph on $n$ vertices and $K_n^-$ for a graph obtained from $K_n$ by deleting one edge. Write $[m, n]$ for the set $\{m, m+1, \ldots, n\}$.

For complete graphs, Barefoot et al.~\cite{K_3} determined $ES(n, K_3)$;
Amin, Faudree, and Gould~\cite{K_4} evaluated $ES(n, K_4)$, and, more generally, $ES(n, K_p)$ for $p\ge 3$ was studied and given by Amin et al. in~\cite{K_p}. { Continuing the work, Gould et al. \cite{small-path} found the edge spectrum of small paths.} Recently, Faudree
et al. completely determined the edge spectrum of stars and partially gave the edge spectrum of paths in~\cite{Paths}.

For $K_t^-$, there is no $ES(n, K_t^-)$ for $t\ge 4$ has been completely determined so far.
It is well known that $ex(n, K_4^-)=\lceil\frac n2\rceil\lfloor\frac n2\rfloor$ and the upper bound can be realized by the complete bipartite graph $K_{\lceil\frac n2\rceil,\lfloor\frac n2\rfloor}$. Chen, Faudree, and Gould~\cite{Chen-Faudree-Gould-08} determined that $sat(n, K_4^-)=\left\lfloor\frac{3(n-1)}2\right\rfloor$ and the lower bound can be realized by the graph obtained from $K_{1,n-1}$ by adding $\lfloor\frac{n-1}2\rfloor$ independent edges.
In \cite{K_t-e}, Fuller and Gould  proved that
\begin{equation}\label{EQN: e1}
\left\{\left\lfloor\frac{3(n-1)}2\right\rfloor\right\}\cup \left[2n-4, \left\lfloor\frac n2\right\rfloor\left\lceil\frac n2\right\rceil-n+6\right]\subseteq ES(n, K_4^-)
\end{equation}
and proposed the following conjecture.
\begin{conj}[Fuller, Gould \cite{K_t-e}]\label{CONJ: c1}
The $K_4^-$-saturated graphs with sizes in the interval $\left[ \lfloor \frac{n}{2} \rfloor \lceil \frac{n}{2} \rceil -n+7,\lfloor \frac{n}{2} \rfloor \lceil \frac{n}{2} \rceil\right] $ are of two types: complete bipartite graphs with partite sets of nearly equal size, and 3-partite graphs with two partite sets of nearly equal size and one partite set of order one.
\end{conj}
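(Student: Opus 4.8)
The plan is to classify directly, for $n$ large, all $K_4^-$-saturated graphs $G$ on $n$ vertices with $e(G)\ge\lfloor n/2\rfloor\lceil n/2\rceil-n+7$, rather than to argue about the interval abstractly. Two facts set the stage. First, $G$ is connected: otherwise a non-edge $uw$ joining two components would have no common neighbour, so adding it creates no triangle and hence no $K_4^-$, contradicting saturation. Second, being $K_4^-$-free is equivalent to the triangles of $G$ being pairwise edge-disjoint (an edge lying in two triangles spans a $K_4^-$, and conversely); consequently every edge $uw$ has $|N(u)\cap N(w)|\le 1$, so $\deg u+\deg w\le n+1$, and the vertices of degree exceeding $(n+1)/2$ form an independent set. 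I will also repeatedly use that, by saturation, every non-edge has at least one common neighbour.

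If $G$ is bipartite with parts $X,Y$, then for a non-edge $xy$ with $x\in X$, $y\in Y$ the neighbourhoods of $x$ and $y$ lie on opposite sides and so are disjoint; saturation then forces $xy\in E(G)$, i.e.\ $G=K_{X,Y}$. A complete bipartite graph is $K_4^-$-free automatically and is saturated exactly when $\min\{|X|,|Y|\}\ge 2$, and $|X||Y|\ge\lfloor n/2\rfloor\lceil n/2\rceil-n+7$ forces $\big||X|-|Y|\big|=O(\sqrt n)$. This is the first type in the conjecture.

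The non-bipartite case is the heart of the matter, and there I would prove the sharper \emph{structural} statement: any non-bipartite $K_4^-$-saturated $G$ with $e(G)$ this large has a vertex $v$ for which $G-v$ is bipartite. Granting this, $G-v$ is connected and bipartite with a unique bipartition $A,B$ (it still has $\Theta(n^2)$ edges), so $G$ is $3$-partite with parts $A$, $B$, $\{v\}$; running the saturation rules at $v$ shows that the edges of $G-v$ lying inside $N_A(v)\times N_B(v)$ form a matching, that every cross-edge of $K_{A,B}$ outside $N_A(v)\times N_B(v)$ is present, and that $v$ has exactly one neighbour in each of $A,B$ forming a triangle when $e(G)$ is near the top of the interval. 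Then $e(G)\ge\lfloor n/2\rfloor\lceil n/2\rceil-n+7$ pins down $\deg v$ and the part sizes and gives $\big||A|-|B|\big|=O(\sqrt n)$, the second type.

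To produce the vertex $v$ I would (i) apply a stability version of $ex(n,K_4^-)=\lceil n/2\rceil\lfloor n/2\rfloor$: since $e(G)$ is within $n-7$ of the extremal value, $G$ has only boundedly many triangles and admits a partition into two nearly equal parts with few edges inside each; and (ii) localise all odd cycles at a single vertex, by showing that two vertex-disjoint triangles, or an odd cycle disjoint from a triangle, would — in the presence of this many edges together with the common-neighbour and saturation constraints — either create a $K_4^-$ or push $e(G)$ below $\lfloor n/2\rfloor\lceil n/2\rceil-n+7$. Step~(ii) is the main obstacle: because the edge bound beats $ex(n,K_4^-)$ by only a linear amount, the stability input must be pushed to an essentially exact structural statement, and the delicate sub-case is the bottom of the interval, where $\deg v$ may be large and many cross-edges of $K_{A,B}$ are absent, so one must verify saturation directly and dovetail with the constructions underlying part~(b). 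Finally, combining both cases with part~(a) — which already gives bipartiteness, hence type~(i), once $e(G)>\lfloor\frac{n-1}{2}\rfloor\lceil\frac{n-1}{2}\rceil+2$ — yields the classification throughout $[\lfloor\frac n2\rfloor\lceil\frac n2\rceil-n+7,\ \lfloor\frac n2\rfloor\lceil\frac n2\rceil]$, while part~(b) shows type~(ii) actually occurs there; this resolves the conjecture.
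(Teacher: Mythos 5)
Your plan targets a full classification of every $K_4^-$-saturated graph with at least $\lfloor \frac n2\rfloor\lceil \frac n2\rceil-n+7$ edges, but its core step (ii) --- forcing all odd cycles through one vertex in the non-bipartite case --- is exactly the part you leave unproven and flag as ``the main obstacle,'' so the proposal is a programme rather than a proof. It is also not how the paper resolves the conjecture. The paper never classifies the graphs in the lower part of the conjectured interval; instead it proves a sharp bipartiteness threshold and exhibits what lives below it. Theorem~2(a) is a short counting argument: take a shortest odd cycle $C$ of length $2t+1$, observe that $K_4^-$-freeness and minimality force every vertex outside $C$ to send at most $t$ edges to $C$, and conclude $e(G)\le \lfloor\frac{n-1}{2}\rfloor\lceil\frac{n-1}{2}\rceil+2-(t-1)^2$; hence only above $\lfloor\frac{n-1}{2}\rfloor\lceil\frac{n-1}{2}\rceil+2$ (a point strictly inside the conjectured interval) must $G$ be bipartite, and there the conjecture holds since a bipartite saturated graph is complete bipartite. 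Below that threshold, Construction~A gives non-bipartite saturated graphs $F_n(a,b)$ realizing every size down to $3n-11$, and this combination --- sharp threshold plus constructions, via Lemmas~2 and~3 --- is what the authors mean by ``resolving'' the conjecture. Your attempt to prove the two-type classification over the whole interval $[\lfloor \frac n2\rfloor\lceil \frac n2\rceil-n+7,\ \lfloor \frac n2\rfloor\lceil \frac n2\rceil]$ is strictly stronger than anything the paper establishes, and nothing in your sketch (stability, ``boundedly many triangles,'' localisation of odd cycles) is carried out; note in particular that edge-disjointness of triangles alone does not bound their number, so the ``boundedly many triangles'' claim itself needs an argument.

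Moreover, the structure you predict after finding the vertex $v$ conflicts with the paper's own extremal examples. In $F_n(a,b)$ the special vertex $x$ is joined to \emph{two} vertices on each side ($A_1$ and $B_1$), giving two triangles through $x$, plus all of $C$ --- not ``exactly one neighbour in each of $A,B$'' even at the top of the range (where $C=\emptyset$). And the two large parts of $F_n(a,b)$ have sizes $n-b-3$ and $b+2$, which can differ by order $\sqrt n$ while $e(F_n(a,b))$ stays inside the conjectured interval; so ``nearly equal'' can only survive in the weak $O(\sqrt n)$ sense you quietly substitute, and any correct classification in the lower subinterval must accommodate the full two-parameter family $F_n(a,b)$ rather than the single rigid shape you describe. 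Concretely: the bipartite half of your argument is fine and matches the easy part of the paper, but the non-bipartite half is both unproven and, as formulated, inconsistent with the graphs that are known to exist in that range.
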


In this paper, we first show that Conjecture~\ref{CONJ: c1} is true when the $K_4^-$-saturated graphs with sizes in the interval $\left[ \lfloor \frac{n-1}{2} \rfloor \lceil \frac{n-1}{2} \rceil +3, \lfloor \frac{n}{2} \rfloor \lceil \frac{n}{2} \rceil\right]$ and we also give $K_4^-$-saturated non-bipartite graphs with sizes in the interval $\left[3n-11,  \lfloor \frac{n-1}{2} \rfloor \lceil \frac{n-1}{2} \rceil +2\right]$. Combining with (\ref{EQN: e1}) we completely determine the edge spectrum of $K_4^-$.
Specifically, we show the following theorem.

\begin{thm}\label{THM: Main}
(a) If $G$ is a $K_4^-$-saturated graph with $|V(G)|=n$ and $|E(G)|>\lfloor \frac{n-1}{2} \rfloor \lceil \frac{n-1}{2} \rceil +2$, then $G$ must be a bipartite graph.

(b)  There exists a $K_4^-$-saturated non-bipartite graph on $n\ge 10$ vertices and $m$ edges where $3n-11\le m\le \lfloor \frac{n-1}{2} \rfloor \lceil \frac{n-1}{2} \rceil +2$.

(c) For $n\ge 10$,
$$
ES(n, K_4^-)=\left\{\left\lfloor \frac{3(n-1)}{2} \right\rfloor \right\}  \cup \left[2n-4 , \left\lfloor \frac{n-1}{2} \right\rfloor \left\lceil \frac{n-1}{2} \right\rceil +2\right] \cup \left\{ i(n-i) : i\in [1, n-1]\right\}.$$
\end{thm}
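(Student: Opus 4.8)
Here is how I would approach Theorem~\ref{THM: Main}. The foundation is a local description of when adding an edge to a $K_4^-$-free graph produces a $K_4^-$, which I would isolate as a lemma first: if $G$ is $K_4^-$-free and $uv\notin E(G)$, then $G+uv$ contains a $K_4^-$ if and only if either (i) $|N_G(u)\cap N_G(v)|\ge 2$, or (ii) one of $u,v$, say $u$, lies on a triangle $uxy$ of $G$ with $v$ adjacent to $x$ or to $y$. The ``if'' part is immediate; for ``only if'', note that since $G$ is $K_4^-$-free the graph $G+uv$ contains no $K_4$, so any new $K_4^-$ spans exactly four vertices with exactly five edges, and a short case analysis according to which of the five pairs is the absent edge (it is never $uv$) yields (i) when that pair misses $\{u,v\}$ and (ii) otherwise. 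I would also record the trivial monotonicity fact that if $G\subseteq G'$, $G$ is $K_4^-$-saturated and $G'$ is $K_4^-$-free, then $G'$ is $K_4^-$-saturated; this is handy for building dense saturated graphs from sparser ones.

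For part (a) I would argue the contrapositive: a non-bipartite $K_4^-$-saturated graph $G$ on $n$ vertices has at most $\lfloor\frac{n-1}{2}\rfloor\lceil\frac{n-1}{2}\rceil+2$ edges. If $G$ is triangle-free, this needs no saturation at all: a triangle-free non-bipartite graph on $n$ vertices has at most $\lfloor\frac{(n-1)^2}{4}\rfloor+1=\lfloor\frac{n-1}{2}\rfloor\lceil\frac{n-1}{2}\rceil+1$ edges (a classical stability strengthening of Mantel's theorem), which already lies below the target. So assume $G$ contains a triangle $T=\{a,b,c\}$. Because $G$ is $K_4^-$-free, no vertex outside $T$ has two neighbours in $T$, giving a partition $V(G)\setminus T=A\cup B\cup C\cup D$ (the sets of vertices adjacent to exactly $a$, exactly $b$, exactly $c$, and to none of them), and $K_4^-$-freeness forces each of $G[A],G[B],G[C]$ to be a matching (two $A$-edges sharing a vertex would, together with $a$, span a $K_4^-$). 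The rest — and the crux — is to squeeze the saturation hypothesis: show $D$ is empty or bounded (a vertex of $D$ can reach $T$ only through one of the very restricted triangles of $G$, which costs edges), observe that an edge inside $A$ together with a common neighbour of its endpoints in $B\cup C$ forces a $K_4^-$, so that the matchings $G[A],G[B],G[C]$ and the bipartite-type edges between the classes are in direct competition, and finally carry out a careful edge count showing $e(G)$ is maximised exactly by the graph that is a complete bipartite graph on $n-1$ vertices plus one extra vertex of degree $2$ completing a single triangle, i.e. $e(G)\le\lfloor\frac{n-1}{2}\rfloor\lceil\frac{n-1}{2}\rceil+2$. I expect this structural analysis — enumerating and ruling out every configuration that would let a non-bipartite $K_4^-$-free graph be denser than the intended extremal one — to be the main obstacle; each sub-case is killed by an application of the local lemma.

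For part (b) I would produce an explicit parametrised family. The densest member $G_{s,t}$ is $K_{s,t}$ with parts $X,Y$, $s+t=n-1$, $s,t\ge 2$, together with a new vertex $v$ joined to one vertex $x_1\in X$ and one vertex $y_1\in Y$; its unique triangle is $vx_1y_1$, every vertex of $X$ is adjacent to $y_1$ and every vertex of $Y$ to $x_1$, so by the local lemma adding any non-edge (inside $X$, inside $Y$, of the form $vx_i$, or of the form $vy_j$) creates a $K_4^-$, whence $G_{s,t}$ is $K_4^-$-saturated and non-bipartite with $st+2$ edges — ranging up to $\lfloor\frac{n-1}{2}\rfloor\lceil\frac{n-1}{2}\rceil+2$ at the balanced choice. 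To hit every size down to $3n-11$ I would add further members: ones in which the bipartite core is made incomplete in a controlled way while a bounded set of special vertices has its neighbourhood adjusted so that every non-edge still meets clause (i) or (ii) of the lemma, and, towards the sparse end, ones obtained by splicing in a backbone like the saturation-number extremal graph ($K_{1,n-1}$ plus a matching of size $\lfloor\frac{n-1}{2}\rfloor$, which has a dominating vertex and $\lfloor\frac{3(n-1)}{2}\rfloor$ edges), using the monotonicity fact to pass from smaller members to larger ones one edge at a time. The routine but bulky part here is verifying $K_4^-$-saturation for each member, organised by the type of the non-edge being added.

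Finally, part (c) is an assembly. From (\ref{EQN: e1}) we have $\left\{\lfloor\frac{3(n-1)}{2}\rfloor\right\}\cup\left[2n-4,\lfloor\frac n2\rfloor\lceil\frac n2\rceil-n+6\right]\subseteq ES(n,K_4^-)$; from (b), $\left[3n-11,\lfloor\frac{n-1}{2}\rfloor\lceil\frac{n-1}{2}\rceil+2\right]\subseteq ES(n,K_4^-)$; and every complete bipartite graph $K_{i,n-i}$ with $2\le i\le n-2$ is $K_4^-$-saturated (it is triangle-free, and adding any within-part edge produces two common neighbours), so the sizes $i(n-i)$ all occur. For $n\ge 10$ one checks $\lfloor\frac n2\rfloor\lceil\frac n2\rceil-n+6\ge 3n-12$, so the two intervals overlap and their union is $\left[2n-4,\lfloor\frac{n-1}{2}\rfloor\lceil\frac{n-1}{2}\rceil+2\right]$; together with $\lfloor\frac{3(n-1)}{2}\rfloor$ and the complete-bipartite sizes this gives exactly the set claimed in (c). For the reverse inclusion: a $K_4^-$-saturated graph with more than $\lfloor\frac{n-1}{2}\rfloor\lceil\frac{n-1}{2}\rceil+2$ edges is bipartite by (a), and a bipartite $K_4^-$-saturated graph must be complete bipartite — adding a non-edge between its two parts leaves it bipartite, hence $K_4^-$-free, so no such non-edge exists — hence has some size $i(n-i)$; and the known structure of $K_4^-$-saturated graphs of small size leaves $\lfloor\frac{3(n-1)}{2}\rfloor$ as the only attained size below $2n-4$. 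The same two facts (part (a), plus ``bipartite saturated $=$ complete bipartite'') resolve Conjecture~\ref{CONJ: c1}: every $K_4^-$-saturated graph with more than $\lfloor\frac{n-1}{2}\rfloor\lceil\frac{n-1}{2}\rceil+2$ edges is complete bipartite, while the constructions of (b) are the non-bipartite saturated graphs realising the smaller sizes the conjecture concerned.
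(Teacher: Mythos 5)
Your proof of part (a) is only a plan, not a proof. The triangle-free case is fine (the classical bound $\lfloor\frac{(n-1)^2}{4}\rfloor+1$ for non-bipartite triangle-free graphs does the job), but the case where $G$ contains a triangle --- which is the entire difficulty --- is left as ``a careful edge count'' over the decomposition $A\cup B\cup C\cup D$, with the decisive claims (that $D$ is empty or bounded, that the matchings inside $A,B,C$ trade off against the cross edges, and that the maximum is the complete bipartite graph on $n-1$ vertices plus a degree-$2$ vertex) merely announced, and you yourself flag this as ``the main obstacle.'' The paper avoids all of this with a short argument that does not even use saturation: take a shortest odd cycle $C$ of length $2t+1$; then $C$ is induced, every vertex outside $C$ sends at most $t$ edges to $V(C)$ ($K_4^-$-freeness handles $t=1$, minimality of $C$ handles $t\ge 2$), and $G-V(C)$ is $K_4^-$-free, so $e(G-V(C))\le \lfloor\frac{n-2t-1}{2}\rfloor\lceil\frac{n-2t-1}{2}\rceil$ by the known value of $ex(\cdot,K_4^-)$; summing gives $e(G)\le \lfloor\frac{n-1}{2}\rfloor\lceil\frac{n-1}{2}\rceil+2-(t-1)^2$. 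You should either carry out your structural count in full or switch to this shortest-odd-cycle argument.

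Part (b) has a genuine gap as well. You verify saturation only for the densest examples ($K_{s,t}$ plus a vertex joined to one vertex of each side), whose sizes $st+2$ with $s+t=n-1$ form a sparse set, not an interval; everything needed to realize every value down to $3n-11$ is deferred to unspecified ``controlled'' modifications of the bipartite core. Moreover, the tool you propose for filling in the remaining sizes cannot work: your monotonicity fact is vacuous, since if $G$ is $K_4^-$-saturated and $G'\supsetneq G$ on the same vertex set then $G'$ already contains a $K_4^-$, so one can never pass from a saturated member to a larger $K_4^-$-free one ``one edge at a time.'' The paper instead exhibits a single two-parameter family $F_n(a,b)$, checks saturation once for all $a\ge 0$, $b\ge 2$ by a seven-case analysis of the added edge, computes $f_n(a,b)=b(n-b-3)+n+a-b+1$, and covers $\left[3n-11,\lfloor\frac{n-1}{2}\rfloor\lceil\frac{n-1}{2}\rceil+2\right]$ by verifying that the ranges for consecutive values of $b$ overlap ($f_n(0,b+1)\le f_n(n-b-5,b)$). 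Your assembly of (c) --- overlap of the two intervals for $n\ge 10$, complete bipartite graphs $K_{i,n-i}$ with $2\le i\le n-2$, and ``bipartite saturated implies complete bipartite'' combined with (a) --- matches the paper's intent, but it cannot stand until (a) and (b) are actually established.
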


Clearly, (c) is a direct corollary from (\ref{EQN: e1}), (a) and (b). We give the proof of (a) and (b) in Section 2.

\section{Proof of Theorem~\ref{THM: Main}}

We first prove (a) of Theorem~\ref{THM: Main}.

\noindent{\bf Proof of Theorem~\ref{THM: Main} (a):}
Suppose to the contrary that $G$ is non-bipartite. 	Let $C$ be a shortest odd cycle in $G$ and $G'=G-V(C)$. Assume $|V(C)|=2t+1$ for some integer $t$.
Since $C$ is a shortest odd cycle and $G$ is $K_4^-$-free, we have $t\ge 1$, and for any $v \in V(G')$, $e(v, V(C))\le t$.
So
\begin{eqnarray*}e(G) &=&e(C)+e(G')+e(V(G'),V(C))\\
                      &\le& 2t+1+\left\lfloor \frac{n-2t-1}{2} \right\rfloor \left\lceil \frac{n-2t-1}{2} \right\rceil+(n-2t-1)t\\
                      &=&\left\lfloor \frac{n-1}{2} \right\rfloor \left\lceil \frac{n-1}{2} \right\rceil +2-(t-1)^2\\
                      &\le& \left\lfloor \frac{n-1}{2} \right\rfloor \left\lceil \frac{n-1}{2} \right\rceil +2,
\end{eqnarray*}
a contradiction. \medskip\rule{1mm}{2mm}
	

To show (b) of Theorem~\ref{THM: Main}, we first construct a family of $K_4^-$-saturated non-bipartite graphs. We write $K(X, Y)$ for the complete bipartite graph with partite sets $X$ and $Y$.


\noindent{\bf Construction A:}   Given nonnegative integers $n, a, b$ with $n\ge a+b+5$ and sets $I, A_1, A_2, B_1,$ $B_2, C$ with $|I|=1, |A_1|=|B_1|=2, |A_2|=a, |B_2|=b$ and $|C|=n-a-b-5$, let $M$ be  two independent edges connecting $A_1$ and $B_1$. Define $F_n(a,b)$ be the graph with vertex set
 $$V(F_n(a,b))=I\cup A_1 \cup A_2 \cup B_1 \cup B_2\cup C$$
 and edge set
 $$E(F_n(a,b))=E(K( A_1\cup A_2\cup C, B_2))\cup E(K(A_2, B_1))\cup E(K(I,  A_1\cup B_1\cup C))\cup M.$$

We can count the number of edges of $F_n(a,b)$ directly from the construction.
\begin{lem}\label{LEM: l0}
	$e(F_n(a, b))=b(n-b-3)+n+a-b+1$.
\end{lem}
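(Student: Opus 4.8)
The plan is to compute $e(F_n(a,b))$ by counting the edges in each of the four parts of $E(F_n(a,b))$ separately and then checking that these four edge sets are pairwise disjoint, so that their sizes simply add.

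First I would record the sizes of the vertex classes, which (being the parts of a fixed vertex partition) are pairwise disjoint: $|I|=1$, $|A_1|=|B_1|=2$, $|A_2|=a$, $|B_2|=b$, $|C|=n-a-b-5$. Hence $|A_1\cup A_2\cup C| = 2 + a + (n-a-b-5) = n-b-3$ and $|A_1\cup B_1\cup C| = 2 + 2 + (n-a-b-5) = n-a-b-1$. Consequently the four parts contribute, respectively: $E(K(A_1\cup A_2\cup C, B_2))$ contributes $(n-b-3)b$ edges; $E(K(A_2,B_1))$ contributes $2a$ edges; $E(K(I, A_1\cup B_1\cup C))$ contributes $n-a-b-1$ edges; and $M$ contributes $2$ edges.

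Next I would verify pairwise disjointness of these four edge sets. Every edge of $K(A_1\cup A_2\cup C, B_2)$ has an endpoint in $B_2$, while none of the other three parts meets $B_2$, so it is disjoint from the rest. Every edge of $K(A_2,B_1)$ joins $A_2$ to $B_1$, whereas $K(I, A_1\cup B_1\cup C)$ consists only of edges incident to $I$ and $M$ only of edges between $A_1$ and $B_1$; since $A_2$ is disjoint from $I$ and from $A_1$, these are disjoint from $K(A_2,B_1)$. Finally, no edge of $M$ is incident to $I$, so $M$ is disjoint from $K(I, A_1\cup B_1\cup C)$. Summing the four counts then gives
$$
e(F_n(a,b)) = (n-b-3)b + 2a + (n-a-b-1) + 2 = b(n-b-3) + n + a - b + 1,
$$
as claimed. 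There is no real obstacle here: the statement is pure bookkeeping, and the only point requiring a moment's care is confirming that the six vertex classes are pairwise disjoint (so the union sizes are exact) and that the four edge classes in the definition of $E(F_n(a,b))$ genuinely do not overlap, which is what the argument above checks.
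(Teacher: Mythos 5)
Your computation is correct and matches the paper, which simply asserts that the edge count follows directly from Construction A without writing it out; your direct count of the four edge classes (with the disjointness check) is exactly that omitted bookkeeping. Nothing further is needed.
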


\begin{lem}\label{LEM: l1}
If $b\ge 2$, then $F_n(a,b)$ is $K_4^-$-saturated for any $a\ge 0$.
\end{lem}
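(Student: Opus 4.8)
The plan is to check the two halves of the definition of saturation separately: that $F_n(a,b)$ contains no $K_4^-$, and that adding any missing edge produces one.

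For $K_4^-$-freeness I would first pin down every triangle of $F_n(a,b)$. Write $I=\{u\}$, $A_1=\{x_1,x_2\}$, $B_1=\{y_1,y_2\}$, with $M=\{x_1y_1,\,x_2y_2\}$. Reading off the edge set, the partition $X=\{u\}\cup B_1\cup B_2$, $Y=A_1\cup A_2\cup C$ is almost a bipartition: $Y$ is independent, and the only edges inside $X$ are $uy_1$ and $uy_2$, so $X$ induces the path $y_1uy_2$. Hence a triangle cannot lie inside $X$ or inside $Y$ and cannot use two vertices of $Y$; it must use exactly one vertex of $Y$ and one edge of $X$, so it is $\{u,y_i,z\}$ with $z\in N(u)\cap N(y_i)=\{x_i\}$. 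Thus the only triangles are $\{u,x_1,y_1\}$ and $\{u,x_2,y_2\}$, and they meet only in $u$. Since a $K_4^-$ is exactly two triangles sharing an edge, $F_n(a,b)$ is $K_4^-$-free; this step uses $|A_1|=|B_1|=2$ but no lower bound on $b$.

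For the completion property I would use the elementary fact that in a $K_4^-$-free graph $G$, adding a non-edge $xy$ creates a $K_4^-$ precisely when either (i) $x$ and $y$ have at least two common neighbours, or (ii) some triangle of $G$ contains one of $x,y$ together with a neighbour of the other; in case (i) any two common neighbours $z,w$ give the $K_4^-$ on $\{x,y,z,w\}$ (with $zw\notin E(G)$ forced by $K_4^-$-freeness), and in case (ii) the triangle plus the new edge gives it. I would record the six neighbourhood types $N(u)=A_1\cup B_1\cup C$, $N(b_2)=A_1\cup A_2\cup C$ for $b_2\in B_2$, $N(c)=\{u\}\cup B_2$ for $c\in C$, $N(a_2)=B_1\cup B_2$ for $a_2\in A_2$, $N(x_i)=\{u,y_i\}\cup B_2$, $N(y_i)=\{u,x_i\}\cup A_2$, and then run through the missing edges by type: inside each of $A_1,A_2,B_1,B_2,C$; between the pairs of parts joined by no edge, namely $I$–$A_2$, $I$–$B_2$, $A_1$–$A_2$, $A_1$–$C$, $A_2$–$C$, $B_1$–$B_2$, $B_1$–$C$; and the two non-$M$ pairs $x_1y_2,\,x_2y_1$. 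Most of these fall to (i): for instance $u$ and $a_2$ have common neighbours $y_1,y_2$; two vertices of $B_2$ share $A_1\cup A_2\cup C$; two vertices of $C$ share $B_2\cup\{u\}$; an $A_2$–$C$ pair shares $B_2$. The leftover cases ($x_1y_2$ and $x_2y_1$; $B_1$–$C$; and, when $a=0$, inside $B_1$ and between $B_1$ and $B_2$) are handled by (ii) using one of the two triangles through $u$.

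The role of the hypothesis $b\ge2$ is worth flagging: for a pair $a_2\in A_2$, $c\in C$ the common neighbourhood is exactly $B_2$ and neither endpoint lies in any triangle, so $|B_2|\ge2$ is genuinely needed there; $b\ge2$ also guarantees that the "inside $B_2$" pairs exist and are covered by (i). Accordingly the only real obstacle is the bookkeeping — keeping track of which of the six parts each vertex belongs to and which $x_i$ is matched to which $y_j$ — rather than any single hard computation; I would present the completion argument as a short table of non-edge types against witnessing copies of $K_4^-$ to keep it under control.
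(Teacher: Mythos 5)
Your proof is correct and follows essentially the same route as the paper: establish $K_4^-$-freeness by showing the only triangles are the two through the vertex of $I$, then run a complete case analysis over the types of missing edges and exhibit a copy of $K_4^-$ for each. Your common-neighbour/triangle criterion is just a convenient repackaging of the paper's explicit four-vertex witnesses, your enumeration of non-edge types is complete, and you correctly locate the $A_2$--$C$ pairs as the place where $b\ge 2$ is genuinely needed.
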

\begin{proof}
Denote $I=\{x\}$, $A_1=\{u_1, u_2\}$, and $B_1=\{v_1, v_2\}$. Assume $u_iv_i \in E(F_n(a,b))$ for $i=1,2$.
Clearly, $F_n(a,b)$ is $K_4^-$-free since $F_n(a,b)$ has only two triangles $xu_1v_1$ and $xu_2v_2$ sharing a common vertex $x$.
In the following, we show that $F_n(a,b)$ is $K_4^-$-saturated. Choose any edge $e=t_1t_2\notin E(F_n(a,b))$, we show that $F_n(a,b)+e$ contains a copy of $K_4^-$.

	(i) $t_1, t_2\in A_1\cup A_2 \cup C$. Choose $b_1, b_2 \in B_2$ (this can be done since $|B_2|=b\ge 2$). Then $\{t_1,t_2,b_1,b_2\}$ induces a copy of  $K_4^-$ in $F_n(a,b)+e$.

	(ii) $t_1, t_2\in B_2$. Then $\{t_1,t_2, u_1, u_2\}$ induces a $K_4^-$ in $F_n(a,b)+e$.

	(iii) $t_1, t_2\in B_1$. Then $\{t_1,t_2, u_1, x\}$ induces a $K_4^-$ in $F_n(a,b)+e$.

	(iv) $t_1\in B_1$ and $t_2 \in B_2$. Assume $t_1=v_1$. Then $\{t_1,t_2,x,u_1\}$ induces  a $K_4^-$ in $F_n(a,b)+e$.

	(v) $t_1=x, t_2\in A_2$. Then $\{x,t_2, v_1, v_2\}$ induces a $K_4^-$ in $F_n(a,b)+e$. Similar argument for the case  $t_1=x, t_2 \in B_2$.

	(vi) $t_1=u_i, t_2=v_{3-i}$ for $i=1,2$. Then $\{x, u_i, v_1, v_2\}$ induces a $K_4^-$ in $F_n(a,b)+e$.

    (vii) $t_1\in C, t_2\in B_1$. Assume $t_2=v_1$. Then $\{x, t_1, v_1, u_1\}$ induces a $K_4^-$ in $F_n(a,b)+e$.

   This completes the proof.

\end{proof}

Now, define $f_n(a,b)=|E(F_n(a,b))|$ and $$\mathcal{F}_n(a,b)=\{f_n(a,b) : b\ge 2, a\ge 0\}.$$ By Lemma~\ref{LEM: l1}, $\mathcal{F}_n(a,b)\subseteq ES(n,K_4^-)$.

\begin{lem}\label{LEM: l2}
$$\left[3n-11, \left\lfloor \frac{n-1}{2} \right\rfloor \left\lceil \frac{n-1}{2} \right\rceil +2\right]\subseteq \mathcal{F}_n(a,b).$$

\end{lem}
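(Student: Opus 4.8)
The plan is to regard $f_n(a,b)$ as a function of the single parameter $a$, for each fixed value of $b$. By Lemma~\ref{LEM: l0}, $f_n(a,b)=b(n-b-3)+n+a-b+1$, which increases by exactly $1$ each time $a$ increases by $1$. Hence, for a fixed $b$ with $2\le b\le n-5$, as $a$ runs over the integers in $[0,\,n-b-5]$ (the values permitted by Construction~A, since we need $|C|=n-a-b-5\ge 0$), $f_n(a,b)$ attains every integer in the interval
$$I_b:=\bigl[\,g(b),\,h(b)\,\bigr],\qquad g(b)=b(n-b-3)+n-b+1,\quad h(b)=b(n-b-3)+2n-2b-4,$$
and $h(b)-g(b)=n-b-5\ge 0$ on this range, so $I_b$ is a nonempty interval of integers. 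By Lemma~\ref{LEM: l1}, every integer lying in $\bigcup_{b=2}^{n-5}I_b$ belongs to $\mathcal F_n(a,b)$, so it suffices to select a subfamily of the $I_b$ whose union is the target interval.

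I would use $I_2,I_3,\dots,I_{b^\ast}$ with $b^\ast:=\lfloor n/2\rfloor-2$ (essentially the integer maximizing the downward parabola $h(b)=-b^2+(n-5)b+2n-4$). The proof then amounts to three short computations. First, $g(2)=3n-11$, the desired left endpoint. Second, evaluating $h$ at $b^\ast$ and splitting on the parity of $n$ (writing $n=2k$, resp.\ $n=2k+1$) gives $h(b^\ast)=\lfloor\frac{n-1}{2}\rfloor\lceil\frac{n-1}{2}\rceil+2$, the desired right endpoint; along the way one checks that $2\le b^\ast\le n-5$ when $n\ge 10$, so all of $I_2,\dots,I_{b^\ast}$ are legitimate. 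Third, a direct expansion gives $g(b+1)=h(b)-b\le h(b)$ for every $b\ge 0$, so consecutive intervals $I_b$ and $I_{b+1}$ overlap. It follows that the union $I_2\cup I_3\cup\dots\cup I_{b^\ast}$ is itself an interval of integers; since it contains $g(2)=3n-11$ (which lies in $I_2$) and $h(b^\ast)=\lfloor\frac{n-1}{2}\rfloor\lceil\frac{n-1}{2}\rceil+2$ (which lies in $I_{b^\ast}$), it contains the whole interval $\bigl[3n-11,\ \lfloor\frac{n-1}{2}\rfloor\lceil\frac{n-1}{2}\rceil+2\bigr]$, and Lemma~\ref{LEM: l2} follows.

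This is elementary arithmetic throughout, so I do not expect a real obstacle; the two points that need care are (i) choosing $b^\ast$ so that $h(b^\ast)$ lands exactly on $\lfloor\frac{n-1}{2}\rfloor\lceil\frac{n-1}{2}\rceil+2$, where the parity split is unavoidable, and (ii) the overlap inequality $g(b+1)\le h(b)$, which is what guarantees that no integer in the target range is skipped as $b$ runs from $2$ to $b^\ast$. As a sanity check I would also verify the endpoint formulas and the range conditions of Construction~A for a few small cases such as $n=10,11,12$.
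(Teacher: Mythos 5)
Your proposal is correct and follows essentially the same route as the paper: for each fixed $b$ the values $f_n(a,b)$ sweep out the integer interval $[f_n(0,b),f_n(n-b-5,b)]$, consecutive intervals overlap since $g(b+1)=h(b)-b\le h(b)$, and the extreme endpoints evaluate to $3n-11$ and $\left\lfloor\frac{n-1}{2}\right\rfloor\left\lceil\frac{n-1}{2}\right\rceil+2$. Your cutoff $b^\ast=\lfloor n/2\rfloor-2$ differs from the paper's $\lfloor (n-5)/2\rfloor$ by one when $n$ is even, but by the symmetry of the parabola $h$ this yields the same right endpoint, so the argument is the same in substance.
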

\begin{proof}
By Lemma~\ref{LEM: l0}, $$f_n(a, b)=b(n-b-3)+n+a-b+1.$$
Clearly, for fixed $a$, $f_n(a, b)$ is a concave function of $b$ with maximum $f_n(a, \lfloor (n-5)/2 \rfloor)$, and for fixed $b$, $f_n(a,b)$ is an increasing function of $a$. Note that $n-5-b \ge a\ge 0$. We have
$$\mathcal{F}_n(a,b)=\bigcup_{b=2}^{\lfloor (n-5)/2 \rfloor}\left[f_n(0,b), f_n(n-b-5,b)\right].$$
We claim that $\mathcal{F}_n(a,b)$ is an interval. To show the claim, it is sufficient to check that two consecutive intervals are overlap.
In fact,
\begin{equation*}
\begin{split}
f_n(0,b+1)&=(b+1)(n-b-1-3)+n-(b+1)+1\\
&=b(n-3-b)+2n-3b-4\\
&\le f_n(n-b-5, b).
\end{split}
\end{equation*}
Note that $f_n(0,2)=3n-11$ and $f_n(n-\lfloor (n-5)/2 \rfloor-5, \lfloor (n-5)/2 \rfloor)=\lfloor \frac{n-1}{2} \rfloor \lceil \frac{n-1}{2} \rceil +2$.
We have
$$\left[3n-11, \left\lfloor \frac{n-1}{2} \right\rfloor \left\lceil \frac{n-1}{2} \right\rceil +2\right]\subseteq \mathcal{F}_n(a,b).$$
\end{proof}


(b) of Theorem~\ref{THM: Main} follows directly from Lemmas~\ref{LEM: l1} and~\ref{LEM: l2}.

\end{document}